\documentclass[11pt]{amsart}

\usepackage[T1]{fontenc}
\usepackage[utf8]{inputenc}
\usepackage{lmodern}
\usepackage{microtype}
\usepackage{amsmath,amssymb,amsfonts,amsthm,mathtools}
\usepackage{enumitem}
\usepackage[dvipsnames]{xcolor}
\usepackage[colorlinks=true,linkcolor=blue,citecolor=blue,urlcolor=blue]{hyperref}

\numberwithin{equation}{section}

\newtheorem{theorem}{Theorem}[section]
\newtheorem{corollary}[theorem]{Corollary}
\newtheorem{proposition}[theorem]{Proposition}
\newtheorem{lemma}[theorem]{Lemma}
\newtheorem{definition}[theorem]{Definition}
\newtheorem{remark}[theorem]{Remark}
\newtheorem{example}[theorem]{Example}

\DeclareMathOperator{\Ric}{Ric}
\DeclareMathOperator{\diam}{diam}
\DeclareMathOperator{\supp}{supp}
\DeclareMathOperator{\Tr}{Tr}

\newcommand{\R}{\mathbb{R}}
\newcommand{\Ptwo}{\mathcal{P}_2}
\newcommand{\Prob}{\mathcal{P}}
\newcommand{\Wtwo}{W_2}
\newcommand{\dd}{\,\mathrm{d}}
\newcommand{\ip}[2]{\left\langle #1,#2\right\rangle}

\newcommand{\e}{\mathbf{e}}

\definecolor{revisionpurple}{RGB}{112,48,160}
\definecolor{REVISIONPURPLE}{RGB}{112,48,160}

\newcommand{\doi}[1]{\href{https://doi.org/#1}{doi:#1}}
\newcommand{\revcite}[1]{\textcolor{revisionpurple}{\NoHyper\cite{#1}\endNoHyper}}

\title[A Liouville theorem on Wasserstein space]
{A Liouville theorem for bounded empirically harmonic functions on $\mathcal{P}_2(M)$}

\author{Hongwei Yuan}
\address{Department of Mathematics, University of Macau}
\email{hwyuan@um.edu.mo}

\subjclass[2020]{35B53, 58J05, 60B05}
\keywords{Liouville theorem, harmonic functions, Wasserstein space, empirical measures, linear functional derivative}

\begin{document}

\begin{abstract}
The classical Liouville theorem states that every bounded harmonic function on Euclidean space is constant. On complete Riemannian manifolds, analogous conclusions hold under geometric assumptions such as nonnegative Ricci curvature. The quadratic Wasserstein space $\Ptwo(M)$ has no canonical infinite-dimensional Riemannian volume and hence no canonical Laplace--Beltrami operator. We introduce a natural finite-particle notion of harmonicity: a continuous function $u:\Ptwo(M)\to\R$ is called empirically harmonic if, for every $N\geq1$, its pullback under the empirical map
\[
\iota_N(x_1,\ldots,x_N)=\frac1N\sum_{i=1}^N\delta_{x_i}
\]
is weakly harmonic on $M^N$. We prove that if $M$ has the finite-product Liouville property, then every bounded empirically harmonic function on $\Ptwo(M)$ is constant. In particular, the result applies to $M=\R^d$ and to every complete connected Riemannian manifold with nonnegative Ricci curvature. We also derive a finite-particle chain rule for sufficiently regular functionals on $\Ptwo(\R^d)$ and show that the empirical Laplacian is exactly the Hessian trace of a discrete $N$-particle lift. Finally, if $M$ admits a nonconstant bounded harmonic function, then $\Ptwo(M)$ admits a nonconstant bounded empirically harmonic linear statistic.
\end{abstract}

\maketitle

\section{Introduction}

The Liouville property is one of the fundamental rigidity phenomena in elliptic partial differential equations. In its classical Euclidean form, it asserts that every bounded harmonic function on $\R^n$ is constant. On a complete Riemannian manifold $M$, the corresponding statement is not automatic since geometry at infinity influences bounded harmonic functions. Yau proved that every positive harmonic function on a complete Riemannian manifold with nonnegative Ricci curvature is constant \cite{Yau1975}; consequently, under the same curvature assumption, every bounded harmonic function is constant after adding a sufficiently large positive constant. The local gradient estimates of Cheng and Yau \cite{ChengYau1975} and the probabilistic and potential-theoretic theory surveyed by Grigor'yan \cite{Grigoryan1999} form part of the classical background.

Let $(M,g)$ be a complete connected Riemannian manifold, let $o\in M$ be fixed, and let $d_M$ denote the Riemannian distance. The quadratic Wasserstein space is
\[
\Ptwo(M)=\left\{\mu\in\Prob(M):\int_M d_M(x,o)^2\dd\mu(x)<\infty\right\},
\]
endowed with the $2$-Wasserstein distance $\Wtwo$. Otto's formal Riemannian calculus \cite{Otto2001}, the metric theory of gradient flows developed by Ambrosio, Gigli, and Savar\'e \cite{AGS2008}, and the monographs of Villani \cite{Villani2003,Villani2009} make it natural to regard $\Ptwo(M)$ as an infinite-dimensional Riemannian space. Second-order aspects have been developed, among others, by Lott \cite{Lott2008} and Gigli \cite{Gigli2012}. Unlike a finite-dimensional Riemannian manifold, however, $\Ptwo(M)$ has no canonical analogue of Riemannian volume. Consequently, the phrase ``the Laplacian on Wasserstein space'' is ambiguous. Important constructions include the entropic-measure and Wasserstein-diffusion approach of von Renesse and Sturm \cite{vonRenesseSturm2009,Sturm2026} and the partial Wasserstein Laplacian of Chow and Gangbo \cite{ChowGangbo2019}.

Our approach tests a function along every finite-particle empirical map. For $N\geq1$, define
\begin{equation}\label{eq:iota-intro}
\iota_N:M^N\longrightarrow\Ptwo(M),\qquad
\iota_N(x_1,\ldots,x_N)=\frac1N\sum_{i=1}^N\delta_{x_i}.
\end{equation}
The map $\iota_N$ is invariant under coordinate permutations and therefore is not an embedding. Its image is the set of equal-weight empirical measures with $N$ particles. We call a continuous function $u:\Ptwo(M)\to\R$ \emph{empirically harmonic} if $u\circ\iota_N$ is weakly harmonic on $M^N$ for every $N$. The normalized product metric on $M^N$ makes $\iota_N$ $1$-Lipschitz, but injectivity is neither asserted nor needed.

Our main theorem is the following.
\begin{theorem}[Liouville theorem on $\Ptwo(M)$]\label{thm:intro}
Let $M$ be a complete connected Riemannian manifold with the finite-product Liouville property: for every $N\geq1$, every bounded weakly harmonic function on $M^N$ is constant. Then every bounded continuous empirically harmonic function on $\Ptwo(M)$ is constant.
\end{theorem}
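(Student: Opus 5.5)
The plan is to reduce everything to the finite-particle level and then pass to the limit by density of empirical measures. Let $u:\Ptwo(M)\to\R$ be bounded, continuous and empirically harmonic. For each $N\geq1$, the function $u_N:=u\circ\iota_N$ is continuous on $M^N$ because $\iota_N$ is $1$-Lipschitz for the normalized product metric. It is bounded by $\sup|u|$, and it is weakly harmonic by assumption. The finite-product Liouville property then says $u_N$ is constant on $M^N$; call this constant $c_N$. So $u$ is constant, equal to $c_N$, on the set $E_N:=\iota_N(M^N)$ of $N$-point equal-weight empirical measures.

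Next I show that the constants $c_N$ do not depend on $N$. Given $N,K\geq1$ and any $x=(x_1,\ldots,x_N)\in M^N$, repeat each point $K$ times to get $y\in M^{NK}$. Then $\iota_{NK}(y)=\iota_N(x)$, so $E_N\subset E_{NK}$ and hence $c_N=c_{NK}$. Applying this to $N$ and $N'$ through the common multiple $NN'$ gives $c_N=c_{NN'}=c_{N'}$. Write $c$ for this common value. Then $u\equiv c$ on $E:=\bigcup_N E_N$, the set of all equal-weight finitely supported measures.

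Finally I pass to the limit. The set $E$ is $\Wtwo$-dense in $\Ptwo(M)$, and since $u$ is continuous, $u\equiv c$ everywhere. The density is the only step needing an argument, and it is standard. Given $\mu\in\Ptwo(M)$, first truncate to a compact set $K_R=\overline{B(o,R)}$, which is compact by Hopf--Rinow, sending the remaining mass to $o$. The $\Wtwo^2$ cost of this is at most $\int_{d(x,o)>R} d(x,o)^2\dd\mu$, which tends to $0$. Next, partition $K_R$ into finitely many Borel pieces of diameter less than $\varepsilon$ and replace the measure by atoms with rational weights, all sharing a common denominator $N$. This costs $O(\varepsilon^2)$ plus a term of order $R^2/N$ for rounding the weights, since the adjusted mass can be moved to $o$ or between nearby atoms. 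A measure with weights in $\frac1N\mathbb{Z}$ lies in $E_N$ once repeated atoms are allowed. Alternatively, one can cite the almost sure $\Wtwo$ convergence of i.i.d.\ empirical measures (Varadarajan's theorem combined with the strong law of large numbers for second moments).

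I expect no step to be a real obstacle. The most delicate point is making sure the continuity and boundedness hypotheses needed for the finite-product Liouville property transfer to $u_N$, together with the density of $E$. The structural heart of the argument is the observation that the constants agree across $N$.
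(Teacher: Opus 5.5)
Your proposal is correct and follows the paper's proof essentially step for step. You pull back along $\iota_N$, apply the finite-product Liouville property to get constants $c_N$, and conclude from continuity and the $\Wtwo$-density of equal-weight empirical measures. The paper proves that density by the same truncation, partition and rational-rounding argument you sketch; your rounding cost is really of order $mR^2/N$, with $m$ the number of cells, which is harmless since $N$ is chosen after $m$. The only cosmetic difference is how the constants are matched: you use $\iota_N(M^N)\subset\iota_{NK}(M^{NK})$ and a common multiple, while the paper evaluates at diagonal configurations, $c_N=u_N(y,\ldots,y)=u(\delta_y)=c_1$, which is slightly more direct.
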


The proof is short and robust. Each finite-particle pullback is bounded and harmonic, hence constant by the assumed finite-dimensional Liouville property. The constants agree since all empirical images contain every Dirac measure, and equal-weight empirical measures are dense in $\Ptwo(M)$. The theorem applies to $M=\R^d$ and, more generally, to every complete connected Riemannian manifold with $\Ric\geq0$, since finite products retain completeness and nonnegative Ricci curvature.

The all-$N$ requirement is strong: it imposes exact harmonicity on every particle configuration, not merely an asymptotic equation as $N\to\infty$. To clarify its differential meaning, we derive a chain rule for sufficiently regular functionals on $\Ptwo(\R^d)$. We then introduce a discrete Hilbert lift on an abstract $N$-point probability space and prove that the full Hessian trace of this finite-dimensional lift equals the empirical Laplacian exactly. 

Finally, the conclusion can fail whenever the bounded Liouville property already fails on the base manifold. If $h$ is a nonconstant bounded harmonic function on $M$, then
\[
u_h(\mu)=\int_M h\dd\mu
\]
is a nonconstant bounded continuous empirically harmonic function on $\Ptwo(M)$.

\section{Preliminaries}

Let $(M,g)$ be a complete connected smooth Riemannian manifold. In local coordinates, we use the Laplace--Beltrami operator
\begin{equation}\label{eq:laplace-beltrami}
\Delta f
=\operatorname{div}\nabla f
=\frac{1}{\sqrt{\det g}}\,\partial_i\!\left(\sqrt{\det g}\,g^{ij}\partial_j f\right).
\end{equation}

For $\mu,\nu\in\Ptwo(M)$, the quadratic Wasserstein distance is
\begin{equation}\label{eq:w2}
\Wtwo(\mu,\nu)^2
=
\inf_{\pi\in\Pi(\mu,\nu)}
\int_{M\times M}d_M(x,y)^2\dd\pi(x,y),
\end{equation}
where $\Pi(\mu,\nu)$ is the set of couplings of $\mu$ and $\nu$.

For $N\geq1$, equip $M^N$ with the normalized product metric
\begin{equation}\label{eq:normalized-metric}
g_N=\frac1N\sum_{i=1}^N\pi_i^*g,
\end{equation}
where $\pi_i:M^N\to M$ is the $i$th coordinate projection. The induced distance is
\begin{equation}\label{eq:dN}
d_N(\boldsymbol{x},\boldsymbol{y})^2
=\frac1N\sum_{i=1}^N d_M(x_i,y_i)^2.
\end{equation}
The empirical map
\begin{equation}\label{eq:emp-map}
\iota_N(\boldsymbol{x})=\mu_{\boldsymbol{x}}^N
=\frac1N\sum_{i=1}^N\delta_{x_i}
\end{equation}
is $1$-Lipschitz, since coupling the $i$th atom with the $i$th atom gives
\begin{equation}\label{eq:iota-lip}
\Wtwo\!\left(\mu_{\boldsymbol{x}}^N,\mu_{\boldsymbol{y}}^N\right)^2
\leq\frac1N\sum_{i=1}^N d_M(x_i,y_i)^2
=d_N(\boldsymbol{x},\boldsymbol{y})^2.
\end{equation}

\begin{definition}[Empirical harmonicity]\label{def:EH}
Let $u:\Ptwo(M)\to\R$ be continuous. For $N\geq1$, set
\[
u_N:=u\circ\iota_N,
\qquad
u_N(x_1,\ldots,x_N)=u\!\left(\frac1N\sum_{i=1}^N\delta_{x_i}\right).
\]
We say that $u$ is \emph{empirically harmonic} if, for every $N\geq1$, $u_N$ is weakly harmonic on $(M^N,g_N)$; namely,
\begin{equation}\label{eq:weak-harmonic}
\int_{M^N}u_N\,\Delta_{g_N}\varphi\dd\operatorname{vol}_{g_N}=0
\end{equation}
for every $\varphi\in C_c^\infty(M^N)$.
\end{definition}

\begin{remark}\label{rem:scaling}
Since $g_N$ differs from the usual product metric by a constant factor,
\begin{equation}\label{eq:laplacian-scaling}
\Delta_{g_N}=N\sum_{i=1}^N\Delta_{x_i}.
\end{equation}
Thus empirical harmonicity is equivalent to
$\sum_{i=1}^N\Delta_{x_i}u_N=0$ and does not depend on the normalization. The normalization is used to obtain the Lipschitz estimate \eqref{eq:iota-lip}.
\end{remark}

\begin{definition}[Finite-product Liouville property]\label{def:FPL}
A complete connected Riemannian manifold $M$ has the \emph{finite-product Liouville property} if, for every $N\geq1$, every bounded weakly harmonic function on $M^N$ is constant.
\end{definition}

\begin{lemma}[Density of empirical measures]\label{lem:density}
The set
\[
\bigcup_{N=1}^{\infty}\iota_N(M^N)
\]
is dense in $\Ptwo(M)$ with respect to $\Wtwo$.
\end{lemma}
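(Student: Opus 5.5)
To prove Lemma~\ref{lem:density}, I would approximate an arbitrary $\mu\in\Ptwo(M)$ in two steps: first by a finitely supported measure with rational weights, and then observe that such a measure is already an equal-weight empirical measure. Throughout I would use the standard characterization that $\Wtwo(\mu_k,\mu)\to0$ if and only if $\mu_k\to\mu$ weakly and $\int d_M(x,o)^2\dd\mu_k\to\int d_M(x,o)^2\dd\mu$. It is often simpler, though, to bound $\Wtwo$ directly by exhibiting explicit couplings, and that is what I will do.

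\textbf{Step 1: truncation and discretization.} Fix $\varepsilon>0$. Since $\int d_M(x,o)^2\dd\mu<\infty$, choose $R$ with $\int_{\{d_M(x,o)>R\}}d_M(x,o)^2\dd\mu<\varepsilon$. The closed ball $\bar B_R(o)$ is compact because $M$ is complete (Hopf--Rinow). Cover it by finitely many Borel sets $A_1,\dots,A_m$ of diameter $<\sqrt{\varepsilon}$, chosen disjoint, and pick points $a_j\in A_j$. Define the map $T$ by
\[
T(x)=a_j \text{ for } x\in A_j,\qquad T(x)=o \text{ for } d_M(x,o)>R,
\]
and set $\nu=T_\#\mu$. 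Using the coupling $(\mathrm{id},T)_\#\mu$ gives
\[
\Wtwo(\mu,\nu)^2\le \int d_M(x,T(x))^2\dd\mu\le \varepsilon+\varepsilon=2\varepsilon.
\]
Thus $\nu$ is a finitely supported measure $\sum_{k} p_k\delta_{b_k}$ with $b_k$ in the finite set $\{a_1,\dots,a_m,o\}$.

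\textbf{Step 2: rational weights.} Replace the weights $p_k$ by rationals $q_k=n_k/N$ with common denominator $N$, summing to $1$ and satisfying $\sum_k|p_k-q_k|<\delta$. The two measures share mass $\min(p_k,q_k)$ at each $b_k$, and the remaining mass, at most $\delta$, can be coupled arbitrarily among the finitely many points. This gives
\[
\Wtwo\Bigl(\nu,\textstyle\sum_k q_k\delta_{b_k}\Bigr)^2\le \delta\,D^2,
\]
where $D=\max_{k,l}d_M(b_k,b_l)\le 2R$. This is small once $\delta$ is chosen after $R$.

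\textbf{Step 3: identification as an empirical measure.} The measure $\sum_k (n_k/N)\delta_{b_k}$ equals $\iota_N(\boldsymbol{x})$, where $\boldsymbol{x}$ lists each $b_k$ exactly $n_k$ times. Combining Steps 1 and 2 with the triangle inequality for $\Wtwo$ completes the proof.

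I do not expect a genuine obstacle. The only care points are the ordering of the parameter choices ($\varepsilon$, then $R$, then the partition, then $\delta$) and the use of completeness to make the ball compact, which is what allows a finite partition.
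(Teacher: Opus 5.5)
Your proof is correct and follows essentially the same route as the paper: truncate to a compact closed ball via Hopf--Rinow, quantize on a finite Borel partition of small diameter, replace the weights by rationals with a common denominator using the common-mass coupling, and read off an equal-weight empirical measure. The only difference is cosmetic. You combine the truncation and the quantization into a single map $T$ and bound $\Wtwo(\mu,T_\#\mu)$ with the coupling $(\mathrm{id},T)_\#\mu$, whereas the paper first passes to $\mu_R$ and then discretizes a compactly supported measure.
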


\begin{proof}
Fix $o\in M$. It is enough to show that finitely supported probability measures with rational weights are dense.

Let $\mu\in\Ptwo(M)$. For $R>0$, define
\[
\mu_R=\mu\!\restriction_{\overline{B_R(o)}}
+\mu\bigl(M\setminus\overline{B_R(o)}\bigr)\delta_o.
\]
By coupling $x\in\overline{B_R(o)}$ to itself and $x\notin\overline{B_R(o)}$ to $o$, we obtain
\[
\Wtwo(\mu,\mu_R)^2
\leq
\int_{M\setminus\overline{B_R(o)}}d_M(x,o)^2\dd\mu(x)
\longrightarrow0
\]
as $R\to\infty$. By the Hopf--Rinow theorem, completeness of $M$ implies that every closed metric ball $\overline{B_R(o)}$ is compact \revcite{Lee2018}. Consequently, each truncation $\mu_R$ is compactly supported; thus the preceding approximation reduces the density argument to compactly supported probability measures.

Assume $\supp\mu\subset K$, where $K\subset M$ is compact. For $\varepsilon>0$, choose a finite Borel partition $K=A_1\cup\cdots\cup A_m$ with $\diam(A_j)\leq\varepsilon$, and choose $y_j\in A_j$. Then
\[
\nu=\sum_{j=1}^m\mu(A_j)\delta_{y_j}
\]
satisfies $\Wtwo(\mu,\nu)\leq\varepsilon$. Choose nonnegative rational numbers $q_j$ with $\sum_jq_j=1$ and with the vector $(q_1,\ldots,q_m)$ arbitrarily close to $(\mu(A_1),\ldots,\mu(A_m))$. If $D=\diam\{y_1,\ldots,y_m\}$, the common-mass coupling gives
\[
\Wtwo\!\left(\sum_j\mu(A_j)\delta_{y_j},\sum_jq_j\delta_{y_j}\right)^2
\leq D^2\,\frac12\sum_{j=1}^m|\mu(A_j)-q_j|.
\]
Thus rationally weighted finitely supported measures are dense. If $q_j=m_j/N$, then
\[
\sum_{j=1}^m q_j\delta_{y_j}
=\frac1N\sum_{j=1}^m\sum_{\ell=1}^{m_j}\delta_{y_j}
\]
is an equal-weight empirical measure. This proves the claim.
\end{proof}

\section{Finite-particle differential calculus on $\Ptwo(\R^d)$}

The Liouville theorem itself requires only Definition~\ref{def:EH} and Lemma~\ref{lem:density}. This section provides a differential interpretation for sufficiently regular functionals on the Euclidean Wasserstein space. All derivatives below are stated on $\Ptwo(\R^d)$.

\subsection{Normalized linear functional derivatives}

Linear functional derivatives are determined only up to additive terms that vanish against signed measures of total mass zero. We use normalized representatives to make the notation unambiguous.

\begin{definition}[First linear functional derivative]\label{def:first-LFD}
A function $f:\Ptwo(\R^d)\to\R$ admits a first linear functional derivative if there is a jointly continuous function
\[
\frac{\delta f}{\delta\mu}:\Ptwo(\R^d)\times\R^d\to\R
\]
such that, for every $\mu,\nu\in\Ptwo(\R^d)$,
\begin{equation}\label{eq:first-variation}
f(\nu)-f(\mu)
=\int_0^1\int_{\R^d}
\frac{\delta f}{\delta\mu}\bigl((1-t)\mu+t\nu\bigr)(x)
\dd(\nu-\mu)(x)\dd t.
\end{equation}
On every $\Wtwo$-bounded set $B\subset\Ptwo(\R^d)$, we assume a quadratic growth bound
\[
\left|\frac{\delta f}{\delta\mu}(\mu)(x)\right|
\leq C_B(1+|x|^2),
\qquad \mu\in B,
\]
and choose the normalized representative satisfying
\begin{equation}\label{eq:first-normalization}
\int_{\R^d}\frac{\delta f}{\delta\mu}(\mu)(x)\dd\mu(x)=0.
\end{equation}
\end{definition}

\begin{definition}[Second linear functional derivative]\label{def:second-LFD}
Assume that $f$ has a first linear functional derivative. A jointly continuous symmetric kernel
\[
\frac{\delta^2 f}{\delta\mu^2}:\Ptwo(\R^d)\times\R^d\times\R^d\to\R
\]
is called a second linear functional derivative if, for every $\mu,\nu\in\Ptwo(\R^d)$ and $\eta=\nu-\mu$,
\begin{align}\label{eq:second-variation}
&f(\nu)-f(\mu)
-\int_{\R^d}\frac{\delta f}{\delta\mu}(\mu)(x)\dd\eta(x)\\
&\quad=
\int_0^1(1-t)
\int_{\R^d}\int_{\R^d}
\frac{\delta^2 f}{\delta\mu^2}\bigl((1-t)\mu+t\nu\bigr)(x,y)
\dd\eta(x)\dd\eta(y)\dd t.\nonumber
\end{align}
On every $\Wtwo$-bounded set $B$, we assume
\[
\left|\frac{\delta^2 f}{\delta\mu^2}(\mu)(x,y)\right|
\leq C_B(1+|x|^2+|y|^2)
\]
and choose a normalized representative satisfying
\begin{equation}\label{eq:second-normalization}
\int_{\R^d}\frac{\delta^2 f}{\delta\mu^2}(\mu)(x,y)\dd\mu(y)=0
\quad\text{for every }x,
\end{equation}
and the analogous identity after interchanging $x$ and $y$.
\end{definition}

\begin{remark}[Gauge freedom]\label{rem:gauge}
Without the normalizations, $\delta f/\delta\mu$ may be changed by a function depending only on $\mu$. Similarly, second-variation kernels may be modified by one-variable gauge terms that do not affect the bilinear expression against signed perturbations of total mass zero. The spatial gradients, Hessians, and mixed spatial derivatives used below are insensitive to the corresponding additive constants.
\end{remark}

\begin{definition}[Second-order regular differentiability; also see \cite{BWYY2026}]\label{def:regularity}
We call $f$ \emph{second-order regularly differentiable} if it admits normalized first and second linear functional derivatives such that\footnote{Here $\operatorname{Sym}^2(\R^d)$ denotes the vector space of symmetric bilinear forms on $\R^d$, equivalently the space of symmetric real $d\times d$ matrices.}
\[
\nabla_x^2\frac{\delta f}{\delta\mu}(\mu)(x)
\in\operatorname{Sym}^2(\R^d),
\qquad
\nabla_x\nabla_y\frac{\delta^2 f}{\delta\mu^2}(\mu)(x,y)
\in\R^{d\times d}
\]
exist and are jointly continuous. We additionally require the compatibility identity
\begin{align}\label{eq:gradient-measure-compatibility}
&\nabla_x\frac{\delta f}{\delta\mu}(\nu)(x)
-\nabla_x\frac{\delta f}{\delta\mu}(\mu)(x)\\
&\quad=\int_0^1\int_{\R^d}
\nabla_x\frac{\delta^2f}{\delta\mu^2}((1-t)\mu+t\nu)(x,y)
\dd(\nu-\mu)(y)\dd t.\nonumber
\end{align}
The relevant spatial derivatives and their operator norms are assumed bounded on every set of the form
\[
B\times K
\quad\text{and}\quad B\times K\times K,
\]
where $B\subset\Ptwo(\R^d)$ is $\Wtwo$-bounded and $K\subset\R^d$ is compact, and to have at most polynomial growth compatible with second moments. The compatibility identity is invariant under the gauge choices described in Remark~\ref{rem:gauge} since spatial gradients remove additive constants.
\end{definition}

\subsection{The particle chain rule}

For $\boldsymbol{x}=(x_1,\ldots,x_N)\in(\R^d)^N$, write
\[
\mu_{\boldsymbol{x}}^N=\frac1N\sum_{i=1}^N\delta_{x_i},
\qquad
f_N(\boldsymbol{x})=f(\mu_{\boldsymbol{x}}^N).
\]
For a smooth kernel $K(x,y)$, set
\[
\Delta_{x,y}K(x,y)
:=\sum_{j=1}^d\partial_{x_j}\partial_{y_j}K(x,y).
\]

\begin{proposition}[Finite-particle chain rule]\label{prop:general-chain-rule}
Let $f:\Ptwo(\R^d)\to\R$ be second-order regularly differentiable. Then $f_N\in C^2((\R^d)^N)$ and, for $1\leq i,k\leq N$ and $1\leq p,q\leq d$,
\begin{align}
\partial_{x_i^p}f_N(\boldsymbol{x})
&=\frac1N\partial_{x^p}\frac{\delta f}{\delta\mu}(\mu_{\boldsymbol{x}}^N)(x_i),\label{eq:first-particle}\\
\partial_{x_i^p x_i^q}^2f_N(\boldsymbol{x})
&=\frac1N\partial_{x^p x^q}^2\frac{\delta f}{\delta\mu}(\mu_{\boldsymbol{x}}^N)(x_i)
+\frac1{N^2}\partial_{x^p y^q}^2\frac{\delta^2f}{\delta\mu^2}(\mu_{\boldsymbol{x}}^N)(x_i,x_i),\label{eq:diag-particle}\\
\partial_{x_i^p x_k^q}^2f_N(\boldsymbol{x})
&=\frac1{N^2}\partial_{x^p y^q}^2\frac{\delta^2f}{\delta\mu^2}(\mu_{\boldsymbol{x}}^N)(x_i,x_k),
\qquad i\neq k.\label{eq:offdiag-particle}
\end{align}
Consequently,
\begin{align}\label{eq:general-empirical-lap}
\sum_{i=1}^N\Delta_{x_i}f_N(\boldsymbol{x})
&=\int_{\R^d}\Delta_x\frac{\delta f}{\delta\mu}(\mu_{\boldsymbol{x}}^N)(y)\dd\mu_{\boldsymbol{x}}^N(y)\\
&\quad+\frac1N\int_{\R^d}\Delta_{x,y}\frac{\delta^2 f}{\delta\mu^2}(\mu_{\boldsymbol{x}}^N)(y,y)\dd\mu_{\boldsymbol{x}}^N(y).\nonumber
\end{align}
Equivalently, by \eqref{eq:laplacian-scaling}, the left-hand side is $N^{-1}\Delta_{g_N}f_N$.
\end{proposition}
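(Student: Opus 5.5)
The strategy is to differentiate $f_N$ along one particle at a time, using the first-variation formula \eqref{eq:first-variation} to produce the first-order formula, and then the compatibility identity \eqref{eq:gradient-measure-compatibility} to differentiate the resulting gradient field once more.

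\textbf{Step 1: first derivatives.} Fix $\boldsymbol{x}$, an index $i$, and a direction $e_p$. Let $\boldsymbol{x}^h$ be $\boldsymbol{x}$ with $x_i$ replaced by $x_i+he_p$, and set $\mu=\mu^N_{\boldsymbol{x}}$, $\nu_h=\mu^N_{\boldsymbol{x}^h}$. Then
\[
\nu_h-\mu=\tfrac1N\bigl(\delta_{x_i+he_p}-\delta_{x_i}\bigr).
\]
Applying \eqref{eq:first-variation} gives
\[
f_N(\boldsymbol{x}^h)-f_N(\boldsymbol{x})=\frac1N\int_0^1\Bigl[\tfrac{\delta f}{\delta\mu}(m_{t,h})(x_i+he_p)-\tfrac{\delta f}{\delta\mu}(m_{t,h})(x_i)\Bigr]\dd t,
\]
where $m_{t,h}=(1-t)\mu+t\nu_h$. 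Write the bracket as $\int_0^h\partial_{x^p}\frac{\delta f}{\delta\mu}(m_{t,h})(x_i+se_p)\dd s$. Divide by $h$ and let $h\to0$. We have $m_{t,h}\to\mu$ in $\Wtwo$ uniformly in $t$, since $\Wtwo(m_{t,h},\mu)^2\le h^2/N$. Joint continuity of the spatial gradient, together with boundedness on $B\times K$, then gives \eqref{eq:first-particle} by dominated convergence. The same continuity shows that the first partials are continuous in $\boldsymbol{x}$.

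\textbf{Step 2: second derivatives.} Differentiate \eqref{eq:first-particle} in $x_k^q$. Write $G(\mu)(x)=\partial_{x^p}\frac{\delta f}{\delta\mu}(\mu)(x)$, and perturb $x_k$ to $x_k+he_q$, calling the new measure $\nu_h$. Then
\[
\tfrac1N\bigl[G(\nu_h)(x_i^{h})-G(\mu)(x_i)\bigr],
\]
where $x_i^h=x_i$ if $i\ne k$ and $x_i^h=x_i+he_q$ if $i=k$. Split this difference into two pieces.

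The measure-variation piece is $G(\nu_h)(x_i^h)-G(\mu)(x_i^h)$. By \eqref{eq:gradient-measure-compatibility} it equals
\[
\frac1N\int_0^1\Bigl[\partial_{x^p}\tfrac{\delta^2f}{\delta\mu^2}(m_{t,h})(x_i^h,x_k+he_q)-\partial_{x^p}\tfrac{\delta^2f}{\delta\mu^2}(m_{t,h})(x_i^h,x_k)\Bigr]\dd t.
\]
Writing the difference in the $y$-slot as an integral of $\partial_{y^q}$ and using joint continuity of $\nabla_x\nabla_y\frac{\delta^2f}{\delta\mu^2}$, we get after division by $h$ the limit $\frac1N\partial_{x^py^q}^2\frac{\delta^2f}{\delta\mu^2}(\mu)(x_i,x_k)$.

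The spatial piece is $G(\mu)(x_i^h)-G(\mu)(x_i)$. It vanishes unless $i=k$, and in that case it gives $\partial^2_{x^px^q}\frac{\delta f}{\delta\mu}(\mu)(x_i)$.

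Multiplying both pieces by the prefactor $1/N$ yields \eqref{eq:diag-particle} and \eqref{eq:offdiag-particle}. Continuity of these expressions in $\boldsymbol{x}$ follows from joint continuity and the $1$-Lipschitz bound \eqref{eq:iota-lip}, so $f_N\in C^2$.

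\textbf{Main obstacle.} The delicate point is the uniform-convergence/dominated-convergence bookkeeping. The mixed derivative is evaluated at interpolated measures $m_{t,h}$ and at shifted points simultaneously, so joint continuity must be combined with the local boundedness on $B\times K\times K$ to pass to the limit under $\int_0^1$. Here $B$ is a $\Wtwo$-ball containing all $m_{t,h}$ for $|h|\le1$, and $K$ is a compact neighbourhood of the particles.

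\textbf{Step 3: the empirical Laplacian.} Set $p=q$ in \eqref{eq:diag-particle}, sum over $p$ and over $i$, and rewrite $\frac1N\sum_i F(x_i)$ as $\int F\dd\mu^N_{\boldsymbol{x}}$. The first term gives the Laplacian integral in \eqref{eq:general-empirical-lap}. The second gives $\frac1N\int\Delta_{x,y}\frac{\delta^2f}{\delta\mu^2}(y,y)\dd\mu^N_{\boldsymbol{x}}$, since it carries an extra factor $1/N^2$ of which one $1/N$ is absorbed into the empirical integral. The final sentence of the statement then follows from \eqref{eq:laplacian-scaling}.
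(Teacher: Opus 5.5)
Your proposal is correct and follows essentially the same route as the paper: the first-variation formula gives the first derivatives, the compatibility identity \eqref{eq:gradient-measure-compatibility} handles the measure-variation part of the second derivatives, the spatial part is differentiated directly, and the trace gives \eqref{eq:general-empirical-lap}. The only cosmetic difference is that you compute the mixed partials coordinatewise, whereas the paper differentiates along a direction $v$ and recovers the diagonal block by polarization.
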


\begin{proof}
Fix $i$ and a direction $v\in\R^d$. Move only the $i$th atom by setting
\[
\boldsymbol{x}^{i,v}(t)=(x_1,\ldots,x_i+tv,\ldots,x_N),
\qquad
\mu_t=\mu_{\boldsymbol{x}^{i,v}(t)}^N.
\]
For $h\neq0$, formula \eqref{eq:first-variation} gives
\begin{align*}
\frac{f(\mu_{t+h})-f(\mu_t)}{h}
=\frac1N\int_0^1
\frac{G_{t,h,s}(x_i+(t+h)v)-G_{t,h,s}(x_i+tv)}{h}\dd s,
\end{align*}
where
\[
G_{t,h,s}(z)
=\frac{\delta f}{\delta\mu}\bigl((1-s)\mu_t+s\mu_{t+h}\bigr)(z).
\]
The joint continuity and growth assumptions allow $h\to0$, yielding
\begin{equation}\label{eq:directional-first}
\frac{\dd}{\dd t}f(\mu_t)
=\frac1N\nabla_x\frac{\delta f}{\delta\mu}(\mu_t)(x_i+tv)\cdot v.
\end{equation}
This proves \eqref{eq:first-particle}.

Differentiate the right-hand side of \eqref{eq:directional-first}. The change of its spatial argument gives
\[
\frac1N \nabla_x^2\frac{\delta f}{\delta\mu}(\mu_t)(x_i+tv)[v,v].
\]
For the change of the measure argument, apply the compatibility identity \eqref{eq:gradient-measure-compatibility} to $\mu_t$ and $\mu_{t+h}$. Since
\[
\mu_{t+h}-\mu_t
=\frac1N\bigl(\delta_{x_i+(t+h)v}-\delta_{x_i+tv}\bigr),
\]
division by $h$ and passage to the limit give
\[
\frac1{N^2}\nabla_x\nabla_y\frac{\delta^2f}{\delta\mu^2}(\mu_t)
(x_i+tv,x_i+tv)[v,v].
\]
At $t=0$, polarization in $v$ yields \eqref{eq:diag-particle}. If a distinct particle $x_k$ is moved in a direction $w$, the spatial argument $x_i$ remains fixed and the same compatibility identity gives
\[
\frac1{N^2}\nabla_x\nabla_y\frac{\delta^2f}{\delta\mu^2}(\mu_{\boldsymbol{x}}^N)
(x_i,x_k)[v,w],
\]
which is \eqref{eq:offdiag-particle}. Taking the trace in \eqref{eq:diag-particle}, summing over $i$, and rewriting the sums as integrals against $\mu_{\boldsymbol{x}}^N$ proves \eqref{eq:general-empirical-lap}.
\end{proof}

\subsection{Cylinder functions}

\begin{proposition}[Cylinder calculation]\label{prop:cylinder}
Let $\phi_1,\ldots,\phi_k\in C^2(\R^d)$ and $\Phi\in C^2(\R^k)$, with sufficient boundedness or growth assumptions for the expressions below. Define
\begin{equation}\label{eq:cylinder}
f(\mu)=\Phi\bigl(m_1(\mu),\ldots,m_k(\mu)\bigr),
\qquad
m_a(\mu)=\int_{\R^d}\phi_a\dd\mu.
\end{equation}
Then normalized representatives of its first and second linear functional derivatives are
\begin{align}
\frac{\delta f}{\delta\mu}(\mu)(x)
&=\sum_{a=1}^k\partial_a\Phi(m(\mu))\bigl(\phi_a(x)-m_a(\mu)\bigr),\label{eq:cyl-first}\\
\frac{\delta^2 f}{\delta\mu^2}(\mu)(x,y)
&=\sum_{a,b=1}^k\partial_{ab}\Phi(m(\mu))
\bigl(\phi_a(x)-m_a(\mu)\bigr)
\bigl(\phi_b(y)-m_b(\mu)\bigr).\label{eq:cyl-second}
\end{align}
For $m_a(\boldsymbol{x})=N^{-1}\sum_i\phi_a(x_i)$, one has
\begin{align}\label{eq:cylinder-lap}
\frac1N\Delta_{g_N}f_N(\boldsymbol{x})
&=\sum_{a=1}^k\partial_a\Phi(m(\boldsymbol{x}))
\int_{\R^d}\Delta\phi_a\dd\mu_{\boldsymbol{x}}^N\\
&\quad+\frac1N\sum_{a,b=1}^k\partial_{ab}\Phi(m(\boldsymbol{x}))
\int_{\R^d}\ip{\nabla\phi_a}{\nabla\phi_b}\dd\mu_{\boldsymbol{x}}^N.\nonumber
\end{align}
\end{proposition}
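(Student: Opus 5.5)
The plan has three parts: verify that the displayed kernels are linear functional derivatives in the sense of Definitions~\ref{def:first-LFD} and~\ref{def:second-LFD}, check their normalizations, and then compute the empirical Laplacian. For the last step I will differentiate $f_N$ directly, and use Proposition~\ref{prop:general-chain-rule} only as a consistency check.

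\textbf{First derivative.} Along the segment $\mu_t=(1-t)\mu+t\nu$ each moment $m_a(\mu_t)$ is affine in $t$, with derivative $\int\phi_a\dd(\nu-\mu)$. Hence $t\mapsto f(\mu_t)$ is $C^2$, and the fundamental theorem of calculus gives
\[
f(\nu)-f(\mu)=\int_0^1\sum_a\partial_a\Phi(m(\mu_t))\int\phi_a\dd(\nu-\mu)\dd t.
\]
Since $\nu-\mu$ has total mass zero, I may replace $\phi_a$ by $\phi_a-m_a(\mu_t)$. This yields \eqref{eq:first-variation} with the kernel \eqref{eq:cyl-first}. The normalization \eqref{eq:first-normalization} holds because $\int(\phi_a-m_a(\mu))\dd\mu=0$.

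\textbf{Second derivative.} I will use Taylor's formula with integral remainder for $g(t)=f(\mu_t)$:
\[
g(1)-g(0)-g'(0)=\int_0^1(1-t)g''(t)\dd t.
\]
Here
\[
g''(t)=\sum_{a,b}\partial_{ab}\Phi(m(\mu_t))\int\phi_a\dd\eta\int\phi_b\dd\eta,
\]
and centering each $\phi_a$ at $m_a(\mu_t)$ again costs nothing, since $\eta$ has mass zero. This gives \eqref{eq:second-variation} with the kernel \eqref{eq:cyl-second}. The kernel is symmetric because $\partial_{ab}\Phi$ is symmetric, and it satisfies \eqref{eq:second-normalization} because each centered factor integrates to zero against $\mu$. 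Joint continuity and the quadratic growth bounds follow from the assumed growth of the $\phi_a$ together with continuity of $\mu\mapsto m(\mu)$ on $\Ptwo$; I will take these as part of the "sufficient growth assumptions" in the statement.

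\textbf{Laplacian.} The cleanest route avoids the compatibility identity \eqref{eq:gradient-measure-compatibility} entirely. The function $f_N(\boldsymbol{x})=\Phi(m(\boldsymbol{x}))$ is an ordinary composition of $C^2$ maps on $(\R^d)^N$. The classical chain rule gives
\[
\nabla_{x_i}f_N=\frac1N\sum_a\partial_a\Phi\,\nabla\phi_a(x_i),
\]
\[
\Delta_{x_i}f_N=\frac1N\sum_a\partial_a\Phi\,\Delta\phi_a(x_i)+\frac1{N^2}\sum_{a,b}\partial_{ab}\Phi\,\ip{\nabla\phi_a(x_i)}{\nabla\phi_b(x_i)}.
\]
Summing over $i$ and writing $N^{-1}\sum_i$ as integration against $\mu_{\boldsymbol{x}}^N$ yields \eqref{eq:cylinder-lap}, once combined with Remark~\ref{rem:scaling}, which gives $\frac1N\Delta_{g_N}=\sum_i\Delta_{x_i}$.

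As a consistency check, I will insert \eqref{eq:cyl-first} and \eqref{eq:cyl-second} into \eqref{eq:general-empirical-lap}. The centering constants $m_a$ are killed by the spatial derivatives: $\Delta_x$ of \eqref{eq:cyl-first} is $\sum_a\partial_a\Phi\,\Delta\phi_a$, and $\Delta_{x,y}$ of \eqref{eq:cyl-second} on the diagonal is $\sum_{a,b}\partial_{ab}\Phi\ip{\nabla\phi_a}{\nabla\phi_b}$. This reproduces the same formula.

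The only delicate point is bookkeeping rather than mathematics. I need to state precisely which growth hypotheses make the derivative kernels satisfy the bounds of Definition~\ref{def:regularity}. For example, it suffices that $\phi_a$ and its first two derivatives have at most quadratic growth, that $\nabla\phi_a\otimes\nabla\phi_b$ is controlled accordingly, and that $\Phi\in C^2$. The direct chain-rule computation of \eqref{eq:cylinder-lap} does not depend on any of these hypotheses.
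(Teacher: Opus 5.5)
Your proposal is correct. Your first two steps coincide with the paper's proof: the fundamental theorem of calculus and Taylor's formula with integral remainder along $t\mapsto\Phi(m(\mu_t))$, where $\mu_t=(1-t)\mu+t\nu$. In both, centering is free because $\eta$ has total mass zero.

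Where you diverge is \eqref{eq:cylinder-lap}. The paper computes $\Delta_x$ of \eqref{eq:cyl-first} and $\Delta_{x,y}$ of \eqref{eq:cyl-second} and substitutes them into Proposition~\ref{prop:general-chain-rule}; this is exactly what you keep only as a consistency check. You instead differentiate $f_N=\Phi(m(\boldsymbol{x}))$ directly by the classical chain rule.

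Your route is more elementary and self-contained. It needs only $\phi_a\in C^2$ and $\Phi\in C^2$, and it never uses that $f$ is second-order regularly differentiable. The paper's route tacitly needs that hypothesis, in particular the compatibility identity \eqref{eq:gradient-measure-compatibility}, which it does not verify. The identity does hold here, since
$\int\nabla_x\frac{\delta^2 f}{\delta\mu^2}(\mu_t)(x,y)\dd(\nu-\mu)(y)=\sum_a\nabla\phi_a(x)\,\frac{\dd}{\dd t}\partial_a\Phi(m(\mu_t))$,
and integrating in $t$ gives it, so your consistency check is legitimate. What the paper's route buys is an illustration of \eqref{eq:general-empirical-lap}: it shows the $1/N$ term arising from the diagonal of the second-variation kernel.

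One small correction to your closing remark on hypotheses. Quadratic growth of the $\phi_a$ does not give the bound $C_B(1+|x|^2+|y|^2)$ required in Definition~\ref{def:second-LFD}. The kernel \eqref{eq:cyl-second} contains products $\phi_a(x)\phi_b(y)$, which can grow like $|x|^2|y|^2$; take $\phi_a(x)=|x|^2$ and $\Phi''\neq0$. At most linear growth of the $\phi_a$ suffices, since $|x||y|\leq\frac12(|x|^2+|y|^2)$. The identity \eqref{eq:second-variation} itself still holds under quadratic growth, because the double integral against $\eta\otimes\eta$ factorizes. Since the proposition leaves its growth assumptions unspecified, this does not affect your argument.
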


\begin{proof}
The first-variation identity follows by applying the ordinary fundamental theorem of calculus to $t\mapsto\Phi(m((1-t)\mu+t\nu))$. Formula \eqref{eq:cyl-first} is centered and therefore satisfies \eqref{eq:first-normalization}. Applying the second-order Taylor formula to the same finite-dimensional curve gives \eqref{eq:cyl-second}; the product centering verifies both normalizations in \eqref{eq:second-normalization}.

The centering terms are constant in the spatial variables. Hence
\[
\Delta_x\frac{\delta f}{\delta\mu}(\mu)(x)
=\sum_a\partial_a\Phi(m(\mu))\Delta\phi_a(x)
\]
and
\[
\Delta_{x,y}\frac{\delta^2f}{\delta\mu^2}(\mu)(x,y)
=\sum_{a,b}\partial_{ab}\Phi(m(\mu))
\ip{\nabla\phi_a(x)}{\nabla\phi_b(y)}.
\]
Substitution into Proposition~\ref{prop:general-chain-rule} proves \eqref{eq:cylinder-lap}.
\end{proof}

\begin{remark}[The finite-particle fluctuation term]\label{rem:fluctuation}
The first term in \eqref{eq:cylinder-lap} is the lift of the base Laplacian acting on linear statistics. The second is a diagonal second-variation term of order $1/N$, generated by moving one atom at a time. For example, if
\[
f(\mu)=\Phi\!\left(\int\phi\dd\mu\right),
\]
then
\[
\frac1N\Delta_{g_N}f_N
=\Phi'(m)\int\Delta\phi\dd\mu_{\boldsymbol{x}}^N
+\frac1N\Phi''(m)\int|\nabla\phi|^2\dd\mu_{\boldsymbol{x}}^N.
\]
Thus even when $\phi$ is harmonic, a nonlinear choice of $\Phi$ generally fails to be empirically harmonic. This illustrates the strength of exact harmonicity for every $N$.
\end{remark}

\subsection{Necessary identities at Dirac masses}

\begin{proposition}[A necessary Dirac-mass test]\label{prop:dirac-test}
Let $f$ be second-order regularly differentiable and empirically harmonic on $\Ptwo(\R^d)$. Then, for every $y\in\R^d$,
\begin{align}
\Delta_x\frac{\delta f}{\delta\mu}(\delta_y)(y)&=0,\label{eq:dirac-first}\\
\Delta_{x,z}\frac{\delta^2 f}{\delta\mu^2}(\delta_y)(y,y)&=0.\label{eq:dirac-second}
\end{align}
These identities are necessary but are not asserted to be sufficient for empirical harmonicity.
\end{proposition}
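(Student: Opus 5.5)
The plan is to evaluate the finite-particle chain rule, Proposition~\ref{prop:general-chain-rule}, at the fully coincident configuration $\boldsymbol{x}=(y,\ldots,y)\in(\R^d)^N$. There $\mu_{\boldsymbol{x}}^N=\delta_y$ for every $N$. We then use empirical harmonicity for two different values of $N$ and separate the two terms by their different $N$-dependence.

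First I would upgrade weak harmonicity to pointwise harmonicity. By Proposition~\ref{prop:general-chain-rule}, $f_N\in C^2((\R^d)^N)$. Integrating by parts against $\varphi\in C_c^\infty$ in \eqref{eq:weak-harmonic} shows that the continuous function $\Delta_{g_N}f_N$ integrates to zero against every test function. Hence it vanishes identically, and by Remark~\ref{rem:scaling} we get $\sum_{i=1}^N\Delta_{x_i}f_N\equiv0$ on $(\R^d)^N$.

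Next I would substitute $\boldsymbol{x}=(y,\ldots,y)$ into \eqref{eq:general-empirical-lap}. Since $\mu_{\boldsymbol{x}}^N=\delta_y$, both integrals reduce to evaluation at $y$, giving
\[
0=A(y)+\frac1N B(y)\qquad\text{for all }N\geq1,
\]
where
\[
A(y)=\Delta_x\frac{\delta f}{\delta\mu}(\delta_y)(y),\qquad
B(y)=\Delta_{x,z}\frac{\delta^2 f}{\delta\mu^2}(\delta_y)(y,y).
\]
The quantities $A(y)$ and $B(y)$ do not depend on $N$, because the measure argument is $\delta_y$ for every $N$. Taking $N=1$ and $N=2$ and subtracting gives $\tfrac12B(y)=0$, so $B(y)=0$ and then $A(y)=0$. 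These are exactly \eqref{eq:dirac-second} and \eqref{eq:dirac-first}.

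The only delicate point is to confirm that the chain rule applies at coincident particles, where several atoms sit at the same location. In the proof of Proposition~\ref{prop:general-chain-rule}, moving one atom perturbs the measure by $\tfrac1N(\delta_{x_i+(t+h)v}-\delta_{x_i+tv})$ regardless of whether other atoms coincide with $x_i$. Therefore formulas \eqref{eq:diag-particle}--\eqref{eq:offdiag-particle} hold on all of $(\R^d)^N$, including the diagonal, and the regularity assumptions cover compact sets containing $y$.

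Gauge independence also needs a check. $A$ involves only a spatial Laplacian, and $B$ only mixed spatial derivatives $\partial_{x^j}\partial_{z^j}$. Neither sees the additive one-variable gauge terms of Remark~\ref{rem:gauge}, so the identities are representative-independent. As stated, no sufficiency claim needs proof.
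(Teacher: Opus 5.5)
Your proposal is correct and is essentially the paper's proof: substitute $x_1=\cdots=x_N=y$ into \eqref{eq:general-empirical-lap}, obtain $A(y)+\frac1N B(y)=0$ for every $N\geq1$, and compare $N=1$ with $N=2$. Your extra step, which upgrades weak harmonicity to the pointwise identity $\sum_i\Delta_{x_i}f_N\equiv0$ using $f_N\in C^2$ and integration by parts, is only implicit in the paper and is exactly the justification needed.
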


\begin{proof}
Set $x_1=\cdots=x_N=y$ in \eqref{eq:general-empirical-lap}. Empirical harmonicity gives
\[
A(y)+\frac1N B(y)=0
\qquad\text{for every }N\geq1,
\]
where $A$ and $B$ are the left-hand sides of \eqref{eq:dirac-first} and \eqref{eq:dirac-second}. Taking, for example, $N=1$ and $N=2$ yields $B(y)=0$ and then $A(y)=0$.
\end{proof}

\section{A discrete Hilbert lift and its Hessian trace}

Let $A_N=\{a_1,\ldots,a_N\}$ be an abstract $N$-point set with the uniform probability measure
\[
m_0^N=\frac1N\sum_{i=1}^N\delta_{a_i}.
\]
Set
\[
H_N=L^2(m_0^N;\R^d),
\qquad
\ip{Y}{Z}_{H_N}=\frac1N\sum_{i=1}^N Y(a_i)\cdot Z(a_i).
\]
After identifying $X\in H_N$ with $(X(a_1),\ldots,X(a_N))\in(\R^d)^N$, define
\begin{equation}\label{eq:discrete-lift}
F_N(X)=f(X_\#m_0^N)
=f\!\left(\frac1N\sum_{i=1}^N\delta_{X(a_i)}\right).
\end{equation}

\begin{proposition}[Exact discrete trace identity]\label{prop:discrete-trace}
Assume that $f_N$ is twice differentiable. For $1\leq i\leq N$ and $1\leq j\leq d$, define
\[
E_{i,j}=\sqrt{N}\,\mathbf{1}_{\{a_i\}}\e_j\in H_N.
\]
Then $(E_{i,j})_{i,j}$ is a complete orthonormal basis of $H_N$, and
\begin{equation}\label{eq:exact-trace}
\Tr_{H_N}D^2F_N(X)
=\sum_{i=1}^N\sum_{j=1}^dD^2F_N(X)[E_{i,j},E_{i,j}]
=\Delta_{g_N}f_N(\boldsymbol{x}),
\end{equation}
where $\boldsymbol{x}=(X(a_1),\ldots,X(a_N))$.
\end{proposition}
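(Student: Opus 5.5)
Under the identification $X\leftrightarrow\boldsymbol{x}=(X(a_1),\ldots,X(a_N))$, the discrete lift $F_N$ and the particle function $f_N$ are the same function on $(\R^d)^N$. The only difference between them is the inner product used to define orthonormality and the trace. The plan is therefore linear algebra: compute $\Tr_{H_N}D^2F_N$ in the basis $E_{i,j}$, rewrite it through ordinary partial derivatives of $f_N$, and compare with the scaling identity \eqref{eq:laplacian-scaling} of Remark~\ref{rem:scaling}.

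\textbf{Orthonormality.} First I check that $(E_{i,j})$ is orthonormal. We have
\[
\ip{E_{i,j}}{E_{k,l}}_{H_N}=\frac1N\sum_{r=1}^N N\,\mathbf 1_{\{r=i\}}\mathbf 1_{\{r=k\}}\,\e_j\cdot\e_l=\delta_{ik}\delta_{jl}.
\]
There are $Nd=\dim H_N$ such vectors, so the family is a complete orthonormal basis. The trace of the symmetric bilinear form $D^2F_N(X)$ on the finite-dimensional Hilbert space $H_N$ is basis-independent, so the first equality in \eqref{eq:exact-trace} holds by definition of the trace.

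\textbf{Directional derivatives.} Under the identification, the vector $E_{i,j}$ corresponds to $\sqrt N$ times the unit coordinate vector in the variable $x_i^j$. The identification is linear, so second directional derivatives satisfy
\[
D^2F_N(X)[E_{i,j},E_{i,j}]=\frac{\dd^2}{\dd t^2}\Big|_{t=0}f_N(\boldsymbol{x}+t\sqrt N\,\e_j^{(i)})=N\,\partial^2_{x_i^jx_i^j}f_N(\boldsymbol{x}).
\]
Here I use only the assumed twice differentiability of $f_N$; the chain rule of Proposition~\ref{prop:general-chain-rule} is not needed. Note that $D^2F_N$ is the Fréchet Hessian, which is independent of the inner product. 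Only the trace depends on the inner product, through the choice of orthonormal basis.

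\textbf{Summing.} Summing over $i$ and $j$ gives
\[
\Tr_{H_N}D^2F_N(X)=N\sum_{i=1}^N\Delta_{x_i}f_N(\boldsymbol{x}).
\]
By \eqref{eq:laplacian-scaling}, the right-hand side equals $\Delta_{g_N}f_N(\boldsymbol{x})$.

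A direct cross-check is also available. The metric $g_N$ on $(\R^d)^N$ coincides with the $H_N$ inner product under the identification, so the Euclidean Laplace--Beltrami operator of $g_N$ is exactly the trace of the Hessian with respect to $\ip{\cdot}{\cdot}_{H_N}$. Since $g_N$ has constant coefficients, $\Delta_{g_N}$ equals $N\sum_i\Delta_{x_i}$, as computed above.

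The only step requiring care is the factor bookkeeping. The $\sqrt N$ normalization in $E_{i,j}$ produces a factor $N$ in the second derivative, and this must match the factor $N$ in \eqref{eq:laplacian-scaling} rather than being confused with the $1/N$ weights in the inner product. Everything else is routine.
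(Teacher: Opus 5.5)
Your proof is correct and follows essentially the same route as the paper. Both arguments verify orthonormality from the $\sqrt{N}$ normalization, observe that $X+tE_{i,j}$ moves only the $i$th particle with velocity $\sqrt{N}\,\e_j$ so that $D^2F_N(X)[E_{i,j},E_{i,j}]=N\,\partial^2_{x_i^jx_i^j}f_N(\boldsymbol{x})$, then sum and apply \eqref{eq:laplacian-scaling}; your cross-check identifying $g_N$ with the $H_N$ inner product is consistent but not needed.
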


\begin{proof}
The normalization of the inner product gives $\|E_{i,j}\|_{H_N}=1$, and the family is plainly orthogonal and complete. The curve $X+tE_{i,j}$ moves only the $i$th particle, with velocity $\sqrt{N}\e_j$. Therefore
\[
D^2F_N(X)[E_{i,j},E_{i,j}]
=N\,\partial_{x_i^j x_i^j}^2f_N(\boldsymbol{x}).
\]
Summing over $i$ and $j$ gives
\[
\Tr_{H_N}D^2F_N(X)
=N\sum_{i=1}^N\Delta_{x_i}f_N(\boldsymbol{x})
=\Delta_{g_N}f_N(\boldsymbol{x})
\]
by \eqref{eq:laplacian-scaling}.
\end{proof}

\begin{corollary}\label{cor:trace-harmonic}
If every pullback $f_N$ is $C^2$, then $f$ is empirically harmonic if and only if, for every $N\geq1$, its discrete lift $F_N$ satisfies
\[
\Tr_{H_N}D^2F_N=0.
\]
\end{corollary}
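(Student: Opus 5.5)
The plan is to reduce Corollary~\ref{cor:trace-harmonic} to Proposition~\ref{prop:discrete-trace} together with the fact that, for $C^2$ functions, weak and classical harmonicity coincide. Fix $N\geq1$ and assume $f_N\in C^2((\R^d)^N)$. Under the identification of $X\in H_N$ with $\boldsymbol{x}=(X(a_1),\ldots,X(a_N))$, the lift satisfies $F_N(X)=f_N(\boldsymbol{x})$. Since this identification is a linear isomorphism, $F_N$ is also $C^2$, and Proposition~\ref{prop:discrete-trace} applies at every $X$. It gives
\[
\Tr_{H_N}D^2F_N(X)=\Delta_{g_N}f_N(\boldsymbol{x})\qquad\text{for all }X\in H_N .
\]
So the vanishing of $\Tr_{H_N}D^2F_N$ on $H_N$ is equivalent to the pointwise classical identity $\Delta_{g_N}f_N\equiv0$ on $(\R^d)^N$.

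Next I would check that, for $C^2$ functions, classical harmonicity is equivalent to the weak condition \eqref{eq:weak-harmonic}. On $(\R^d)^N$ with the flat metric $g_N$, the volume form is a constant multiple of Lebesgue measure, and by \eqref{eq:laplacian-scaling} we have $\Delta_{g_N}=N\sum_i\Delta_{x_i}$.

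\emph{Classical implies weak.} For $\varphi\in C_c^\infty$, integrating by parts twice gives
\[
\int f_N\,\Delta_{g_N}\varphi=\int\varphi\,\Delta_{g_N}f_N .
\]
There are no boundary terms because $\varphi$ has compact support and $f_N\in C^2$. So $\Delta_{g_N}f_N\equiv0$ implies \eqref{eq:weak-harmonic}.

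\emph{Weak implies classical.} If \eqref{eq:weak-harmonic} holds, the same identity gives $\int\varphi\,\Delta_{g_N}f_N=0$ for every test function $\varphi$. Since $\Delta_{g_N}f_N$ is continuous, it vanishes identically by the fundamental lemma of the calculus of variations.

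Combining the two steps for each $N$ and quantifying over all $N\geq1$ yields the stated equivalence with Definition~\ref{def:EH}. Continuity of $f$, which Definition~\ref{def:EH} requires, is understood as part of the standing hypotheses.

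I expect no real obstacle; the only care needed is bookkeeping. Proposition~\ref{prop:discrete-trace} assumes only twice differentiability, whereas the passage from weak to pointwise harmonicity uses continuity of the second derivatives. This is exactly why the corollary assumes $f_N\in C^2$ rather than mere twice differentiability.
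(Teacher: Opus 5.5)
Your proposal is correct and matches the paper's argument. The paper gives no separate proof and treats the corollary as immediate from the identity $\Tr_{H_N}D^2F_N(X)=\Delta_{g_N}f_N(\boldsymbol{x})$ of Proposition~\ref{prop:discrete-trace}, together with the equivalence of weak and classical harmonicity for $C^2$ functions, a step it leaves implicit and that you correctly supply by integration by parts and the fundamental lemma.
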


Let $\e_j\in H_N$ also denote the constant vector field $a\mapsto\e_j$. These $d$ vectors form an orthonormal basis only of the $d$-dimensional subspace of common translations. Define
\[
\Delta_{\mathrm{trans}}F_N(X)
:=\sum_{j=1}^dD^2F_N(X)[\e_j,\e_j].
\]
For a second-order regularly differentiable $f$, formulas \eqref{eq:diag-particle}--\eqref{eq:offdiag-particle} give
\begin{align}\label{eq:translation-trace}
\Delta_{\mathrm{trans}}F_N(X)
&=\int_{\R^d}\Delta_x\frac{\delta f}{\delta\mu}(\mu)(x)\dd\mu(x)\\
&\quad+\int_{\R^d}\int_{\R^d}
\Delta_{x,y}\frac{\delta^2f}{\delta\mu^2}(\mu)(x,y)
\dd\mu(x)\dd\mu(y),\nonumber
\end{align}
where $\mu=X_\#m_0^N$. The double integral contains all particle pairs since a common translation moves every atom simultaneously. In contrast, the empirical Laplacian contains only the diagonal second-variation contribution, with coefficient $1/N$, as shown in \eqref{eq:general-empirical-lap}.

\begin{corollary}[Compatibility at Dirac masses]\label{cor:dirac-compatibility}
If $f$ is second-order regularly differentiable and empirically harmonic, then
\[
\Delta_{\mathrm{trans}}F_N(X_y)=0
\]
for every constant map $X_y(a_i)=y$.
\end{corollary}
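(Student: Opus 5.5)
The plan is to evaluate the translation-trace formula \eqref{eq:translation-trace} at the Dirac configuration and then kill both of its terms with the Dirac-mass identities of Proposition~\ref{prop:dirac-test}. Let $X_y$ be the constant map, so that $\mu=(X_y)_\#m_0^N=\delta_y$ for every $N$.

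Integration against $\delta_y$ reduces to evaluation at $y$. The first integral in \eqref{eq:translation-trace} therefore becomes $\Delta_x\frac{\delta f}{\delta\mu}(\delta_y)(y)$. The double integral collapses to the single diagonal value
\[
\Delta_{x,z}\frac{\delta^2 f}{\delta\mu^2}(\delta_y)(y,y).
\]
So at the Dirac configuration the sum over all particle pairs, which is what distinguishes the translation trace from the empirical Laplacian, contains only the diagonal term. Its coefficient is $1$ instead of $1/N$.

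Proposition~\ref{prop:dirac-test} applies because $f$ is second-order regularly differentiable and empirically harmonic. It gives \eqref{eq:dirac-first} and \eqref{eq:dirac-second}, so both terms vanish and $\Delta_{\mathrm{trans}}F_N(X_y)=0$.

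The main point to check is that \eqref{eq:translation-trace} legitimately follows from \eqref{eq:diag-particle}--\eqref{eq:offdiag-particle}. One can either rely on the paper's derivation or verify it directly. The common translation $\e_j$ has coordinates $\e_j$ in every particle slot, so by the chain rule
\[
D^2F_N[\e_j,\e_j]=\sum_{i,k}\partial^2_{x_i^jx_k^j}f_N .
\]
Summing over $j$, the diagonal blocks give the first term from the $\frac1N\Delta_x\frac{\delta f}{\delta\mu}$ parts. The $\frac1{N^2}$ second-variation parts, taken over all pairs $(i,k)$ including $i=k$, give the double integral.

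At $\boldsymbol{x}=(y,\dots,y)$ one can also bypass \eqref{eq:translation-trace} entirely. Every entry of \eqref{eq:offdiag-particle} equals the diagonal value, so
\[
\Delta_{\mathrm{trans}}F_N(X_y)=A(y)+B(y)
\]
in the notation of the proof of Proposition~\ref{prop:dirac-test}. The same proof gives $A(y)=B(y)=0$.
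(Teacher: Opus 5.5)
Your proposal is correct and matches the paper's argument: at $\mu=\delta_y$ the two terms of \eqref{eq:translation-trace} reduce to the left-hand sides of \eqref{eq:dirac-first} and \eqref{eq:dirac-second}, and both vanish by Proposition~\ref{prop:dirac-test}. Your derivation of \eqref{eq:translation-trace} from \eqref{eq:diag-particle}--\eqref{eq:offdiag-particle} and your bypass at $(y,\dots,y)$ are also correct and make explicit what the paper leaves implicit (in fact a common translation of $X_y$ just moves the Dirac mass, so $\Delta_{\mathrm{trans}}F_N(X_y)=\Delta f_1(y)$, which vanishes by harmonicity of $f_1$ alone).
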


\begin{proof}
At $\mu=\delta_y$, formula \eqref{eq:translation-trace} is the sum of the two quantities in \eqref{eq:dirac-first} and \eqref{eq:dirac-second}, both of which vanish by Proposition~\ref{prop:dirac-test}.
\end{proof}

\section{Main Liouville theorem}

\begin{theorem}[Liouville theorem for empirically harmonic functions]\label{thm:main}
Let $M$ be a complete connected Riemannian manifold with the finite-product Liouville property. If
\[
u\in C(\Ptwo(M))\cap L^\infty(\Ptwo(M))
\]
is empirically harmonic, then $u$ is constant.
\end{theorem}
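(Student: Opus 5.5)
The plan has three steps: show each finite-particle pullback is constant, show the constants do not depend on $N$, and then pass to all of $\Ptwo(M)$ by density.

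\emph{Step 1: each pullback is constant.} Fix $N\geq1$ and set $u_N=u\circ\iota_N$. Since $u$ is continuous and $\iota_N$ is $1$-Lipschitz by \eqref{eq:iota-lip}, $u_N$ is continuous on $M^N$. It is also bounded, with $\|u_N\|_\infty\leq\|u\|_{L^\infty}$. Here I read $L^\infty(\Ptwo(M))$ as uniform boundedness, which is the only sensible meaning because $\Ptwo(M)$ carries no reference measure. By Definition~\ref{def:EH}, $u_N$ is weakly harmonic on $(M^N,g_N)$. Remark~\ref{rem:scaling} shows this is the same condition as weak harmonicity for the unnormalized product metric. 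The two volume forms differ by a constant factor, and $\Delta_{g_N}$ is a constant multiple of the product Laplacian, so the test identity \eqref{eq:weak-harmonic} is unchanged. The finite-product Liouville property (Definition~\ref{def:FPL}) then yields a constant $c_N$ with $u_N\equiv c_N$ on $M^N$. Because $u_N$ is continuous, this holds pointwise, not merely almost everywhere.

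\emph{Step 2: the constants agree.} Fix any $y\in M$. The diagonal point $(y,\ldots,y)\in M^N$ is mapped by $\iota_N$ to $\delta_y$. Hence $c_N=u(\delta_y)=c_1$ for every $N$. Write $c:=c_1$; then $u\equiv c$ on $\bigcup_{N}\iota_N(M^N)$.

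\emph{Step 3: density.} By Lemma~\ref{lem:density}, the set $\bigcup_N\iota_N(M^N)$ is $\Wtwo$-dense in $\Ptwo(M)$. Since $u$ is $\Wtwo$-continuous and equals $c$ on a dense set, $u\equiv c$ everywhere.

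I expect no serious obstacle. The only point needing care is Step 1: the Liouville hypothesis must be applied to exactly the notion of harmonicity in \eqref{eq:weak-harmonic}, and it must be stated for the metric in which it is assumed. The scaling remark settles the first issue. For the second, completeness of $(M^N,g_N)$ holds because it is a constant rescaling of a finite product of complete manifolds. Everything else reduces to the earlier lemma.
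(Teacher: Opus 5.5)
Your proof is correct and follows essentially the same route as the paper: each pullback $u_N$ is bounded and weakly harmonic, hence constant by the finite-product Liouville property. The constants all equal $u(\delta_y)$ via the diagonal point, and density of empirical measures together with $\Wtwo$-continuity gives $u\equiv c_1$. Your extra remarks (continuity of $u_N$, and invariance of weak harmonicity under the constant rescaling $g_N$) are harmless clarifications of points the paper leaves implicit.
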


\begin{proof}
Fix $N\geq1$. By empirical harmonicity,
\[
u_N(\boldsymbol{x})
=u\!\left(\frac1N\sum_{i=1}^N\delta_{x_i}\right)
\]
is weakly harmonic on $M^N$. Since $u$ is bounded on $\Ptwo(M)$, every pullback $u_N$ is bounded on $M^N$. By the finite-product Liouville property, $u_N$ is constant; denote its value by $c_N$.

For any fixed $y\in M$,
\[
c_N=u_N(y,\ldots,y)=u(\delta_y)=u_1(y)=c_1.
\]
Thus $u=c_1$ on every empirical image $\iota_N(M^N)$. By Lemma~\ref{lem:density}, the union of these images is dense in $\Ptwo(M)$. Since $u$ is continuous with respect to $\Wtwo$, it follows that $u\equiv c_1$ on all of $\Ptwo(M)$.
\end{proof}

\begin{corollary}[Euclidean base]\label{cor:euclidean}
Let $M=\R^d$. Then every bounded continuous empirically harmonic function on $\Ptwo(\R^d)$ is constant.
\end{corollary}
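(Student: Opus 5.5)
The plan is to deduce Corollary~\ref{cor:euclidean} directly from Theorem~\ref{thm:main}. To do so, I only need to check that $M=\R^d$, with its flat metric, has the finite-product Liouville property of Definition~\ref{def:FPL}. Note that $\R^d$ is complete and connected, so the standing hypotheses of the theorem are satisfied.

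First, fix $N\geq1$. The product $(\R^d)^N$ is canonically $\R^{Nd}$. I will equip it with either the usual product metric or the normalized metric $g_N$ of \eqref{eq:normalized-metric}. By Remark~\ref{rem:scaling}, the two Laplacians differ only by the constant factor $N$, so weak harmonicity is the same notion for both. With the usual metric, $\Delta_{g}$ is the standard Euclidean Laplacian on $\R^{Nd}$, and the volume forms also differ only by a constant factor. Hence a function $v\in L^\infty(\R^{Nd})$ is weakly harmonic in the sense of \eqref{eq:weak-harmonic} exactly when $\int v\,\Delta\varphi=0$ for every $\varphi\in C_c^\infty(\R^{Nd})$.

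Second, I apply Weyl's lemma: such a distributional solution agrees almost everywhere with a smooth harmonic function. The classical Liouville theorem on $\R^{Nd}$ then shows that this representative is constant. In the setting of Theorem~\ref{thm:main}, the pullbacks $u_N$ are continuous, since $u$ is continuous and $\iota_N$ is $1$-Lipschitz by \eqref{eq:iota-lip}. The almost-everywhere equality therefore upgrades to equality everywhere. The point is that Definition~\ref{def:FPL} must be read for continuous (or a.e.-defined) functions, and for the pullbacks continuity makes the two readings agree.

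Third, with the finite-product Liouville property verified for every $N$, Theorem~\ref{thm:main} applies to $M=\R^d$ and gives the conclusion. No step is a genuine obstacle. The only care needed is the passage from weak to classical harmonicity, which Weyl's lemma handles. Alternatively, one can observe that $(\R^d)^N$ is complete with $\Ric=0\geq0$ and invoke Yau's theorem, as noted in the introduction, but in flat space the elementary Weyl-plus-Liouville route suffices.
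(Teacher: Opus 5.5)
Your proposal is correct and matches the paper's proof: identify $(\R^d)^N$ with $\R^{dN}$, use the classical Liouville theorem to get the finite-product Liouville property, and apply Theorem~\ref{thm:main}. Your extra care (Weyl's lemma, plus using continuity of $u_N$ to upgrade a.e.\ constancy to constancy everywhere, which matters because the theorem's proof evaluates $u_N$ on the diagonal) is a sensible refinement that the paper leaves implicit.
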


\begin{proof}
For every $N\geq1$, $M^N=\R^{dN}$. The classical Liouville theorem for bounded harmonic functions on Euclidean space gives the finite-product Liouville property, so Theorem~\ref{thm:main} applies.
\end{proof}

\begin{corollary}[Nonnegative Ricci curvature]\label{cor:ricci}
Let $M$ be a complete connected Riemannian manifold with $\Ric_M\geq0$. Then every bounded continuous empirically harmonic function on $\Ptwo(M)$ is constant.
\end{corollary}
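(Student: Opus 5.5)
The plan is to reduce Corollary~\ref{cor:ricci} to Theorem~\ref{thm:main}. To do this, we verify that $M$ has the finite-product Liouville property of Definition~\ref{def:FPL}. Fix $N\geq1$ and consider $M^N$ with the ordinary product metric $\sum_i\pi_i^*g$. By Remark~\ref{rem:scaling}, weak harmonicity for $g_N$ is the same as weak harmonicity for the product metric, since the two Laplacians differ by the constant factor $N$ and the volume forms differ by a constant factor. So it suffices to show that bounded weakly harmonic functions on the product manifold $(M^N,\sum_i\pi_i^*g)$ are constant.

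The first step checks the geometric hypotheses on the product.

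\textbf{Connectedness.} A finite product of connected spaces is connected.

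\textbf{Completeness.} The product distance satisfies $d(\boldsymbol x,\boldsymbol y)^2=\sum_i d_M(x_i,y_i)^2$. A Cauchy sequence in $M^N$ is therefore Cauchy in each coordinate, and each coordinate converges by completeness of $M$. Equivalently, by Hopf--Rinow, closed balls in $M^N$ lie in products of closed balls of $M$, which are compact.

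\textbf{Curvature.} The Ricci tensor of a Riemannian product is the direct sum of the factor Ricci tensors. For a tangent vector $v=(v_1,\ldots,v_N)$ we get $\Ric_{M^N}(v,v)=\sum_i\Ric_M(v_i,v_i)\geq0$, because mixed curvature terms vanish on a product.

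The second step applies Yau's Liouville theorem \cite{Yau1975} on $M^N$. This requires first upgrading a bounded weakly harmonic $u_N$ to a smooth harmonic function. Weyl's lemma for the Laplace--Beltrami operator, an elliptic operator with smooth coefficients in local charts, gives smoothness for locally integrable distributional solutions. Since $u_N$ is bounded, hence in $L^1_{\mathrm{loc}}$, it agrees almost everywhere with a smooth harmonic function. Because $u_N$ is itself continuous, being the composition of $u$ with the Lipschitz map $\iota_N$, it equals that smooth function everywhere. Then $u_N+\sup|u_N|+1$ is a positive harmonic function on a complete manifold with $\Ric\geq0$, so by Yau it is constant, and hence $u_N$ is constant.

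With the finite-product Liouville property established, Theorem~\ref{thm:main} yields the corollary. I expect no real obstacle. The only points needing care are the weak-to-classical regularity step and the product Ricci computation, and both are standard. If one prefers to avoid Yau's positive-function result, the Cheng--Yau gradient estimate \cite{ChengYau1975} applied to bounded harmonic functions on balls of radius $R\to\infty$ gives the same conclusion directly.
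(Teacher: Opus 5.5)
Your proposal is correct and follows essentially the same route as the paper. In both, $M^N$ (with $g_N$ or the plain product metric, which have the same harmonic functions) is complete with $\Ric\geq0$; a bounded weakly harmonic function is upgraded to a smooth one by elliptic regularity, shifted by a constant to be positive, and shown constant by Yau's theorem, which gives the finite-product Liouville property and lets Theorem~\ref{thm:main} apply. Your remark that continuity of $u_N$ turns a.e.\ equality with the smooth representative into equality everywhere makes explicit a point the paper leaves implicit: it is exactly what justifies evaluating $u_N$ on the diagonal in the proof of Theorem~\ref{thm:main}.
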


\begin{proof}
For every $N\geq1$, the product manifold $M^N$ is complete and has nonnegative Ricci curvature. Multiplying the product metric by the positive constant $1/N$ preserves harmonicity and the sign of the Ricci tensor. Let $v$ be a bounded weakly harmonic function on $M^N$. By elliptic regularity, $v$ has a smooth harmonic representative. Choose $A>\|v\|_{L^\infty}+1$. Then $v+A$ is a positive harmonic function. Yau's Liouville theorem implies that $v+A$, and hence $v$, is constant. Thus $M$ has the finite-product Liouville property, and Theorem~\ref{thm:main} applies.
\end{proof}

\section{A partial converse and examples}

The next proposition shows that the conclusion of Theorem~\ref{thm:main} can fail whenever the bounded Liouville property already fails on the base manifold. 

\begin{proposition}[Counterexample from the base manifold]\label{prop:partial-converse}
Assume that $M$ admits a nonconstant bounded harmonic function $h:M\to\R$. Then
\[
u_h:\Ptwo(M)\longrightarrow\R,
\qquad
u_h(\mu)=\int_M h\dd\mu,
\]
is a nonconstant bounded continuous empirically harmonic function.
\end{proposition}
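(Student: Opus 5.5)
The plan is to verify four properties of $u_h$ directly: boundedness, nonconstancy, continuity, and empirical harmonicity. Throughout, $h$ is taken to be smooth; a harmonic function on $M$ is smooth by elliptic regularity, and so is any weakly harmonic one after modification on a null set.

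\emph{Boundedness and nonconstancy.} Boundedness is immediate from $|u_h(\mu)|\le\|h\|_\infty$. For nonconstancy, choose $y,z\in M$ with $h(y)\ne h(z)$. Then
\[
u_h(\delta_y)=h(y)\neq h(z)=u_h(\delta_z).
\]

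\emph{Empirical harmonicity.} The pullback is the separable sum
\[
u_{h,N}(\boldsymbol{x})=\frac1N\sum_{i=1}^N h(x_i).
\]
It is smooth on $M^N$, and $\Delta_{x_i}h(x_k)=0$ for $i\ne k$, so
\[
\sum_{i=1}^N\Delta_{x_i}u_{h,N}=\frac1N\sum_{i=1}^N(\Delta h)(x_i)=0.
\]
By Remark~\ref{rem:scaling}, this gives $\Delta_{g_N}u_{h,N}=0$ classically. Weak harmonicity in the sense of \eqref{eq:weak-harmonic} then follows by integrating by parts against $\varphi\in C_c^\infty(M^N)$ with respect to $\operatorname{vol}_{g_N}$. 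Here one uses that $\Delta_{g_N}$ is the Laplace--Beltrami operator of $g_N$, hence symmetric on compactly supported functions.

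\emph{Continuity.} I expect this to be the main obstacle, because $h$ is bounded and continuous but not Lipschitz in general. Moreover, $W_2$-convergence implies weak (narrow) convergence of measures, and integration against a bounded continuous function is continuous for weak convergence. So I plan to argue as follows.
\begin{enumerate}[label=(\roman*)]
\item If $\mu_n\to\mu$ in $\Wtwo$, then $\mu_n\to\mu$ narrowly. This is a standard fact from \cite{Villani2009} or \cite{AGS2008}.
\item Since $h\in C_b(M)$, narrow convergence gives $\int h\dd\mu_n\to\int h\dd\mu$.
\end{enumerate}
If a self-contained argument for (i) is preferred, I would proceed as follows. Let $\pi_n$ be optimal couplings of $\mu_n$ and $\mu$. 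Fix $\varepsilon>0$ and a compact $K$ with $\mu(M\setminus K)<\varepsilon$; this exists because closed balls are compact by Hopf--Rinow. By uniform continuity of $h$ on the $1$-neighborhood of $K$, there is $\eta\in(0,1)$ such that $|h(x)-h(y)|<\varepsilon$ whenever $y\in K$ and $d_M(x,y)<\eta$. We split
\[
\Bigl|\int h\dd\mu_n-\int h\dd\mu\Bigr|
\le\int|h(x)-h(y)|\dd\pi_n(x,y).
\]
On the part where $y\in K$ and $d_M(x,y)<\eta$, the integrand is below $\varepsilon$. The remaining part has $\pi_n$-mass at most
\[
\varepsilon+\eta^{-2}\Wtwo(\mu_n,\mu)^2
\]
by Chebyshev's inequality, and there the integrand is at most $2\|h\|_\infty$. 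Letting $n\to\infty$ and then $\varepsilon\to0$ gives continuity.
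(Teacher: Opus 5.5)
Your proof is correct and follows the paper's argument: boundedness from $\|h\|_\infty$, continuity because $W_2$-convergence implies narrow convergence tested against $h\in C_b(M)$, harmonicity of the separable pullback $\frac1N\sum_i h(x_i)$, and nonconstancy via $u_h(\delta_x)=h(x)$. Your self-contained coupling and Chebyshev argument for the continuity step, and your explicit passage from classical to weak harmonicity, are valid elaborations of points the paper treats as standard.
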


\begin{proof}
Since $h$ is bounded, $u_h$ is bounded. Since $\Wtwo$-convergence implies weak convergence of probability measures and $h$ is bounded and continuous, $u_h$ is continuous on $\Ptwo(M)$.

For $\boldsymbol{x}=(x_1,\ldots,x_N)$,
\[
(u_h)_N(\boldsymbol{x})=\frac1N\sum_{i=1}^Nh(x_i).
\]
Therefore
\[
\sum_{i=1}^N\Delta_{x_i}(u_h)_N
=\frac1N\sum_{i=1}^N\Delta h(x_i)=0.
\]
Thus $u_h$ is empirically harmonic. Finally, $u_h(\delta_x)=h(x)$, so $u_h$ is nonconstant whenever $h$ is nonconstant.
\end{proof}

\begin{example}[A nonlinear obstruction]\label{ex:nonlinear}
Let $M=\R^d$, let $\phi\in C^2(\R^d)$ be harmonic, and consider
\[
f(\mu)=\Phi\!\left(\int\phi\dd\mu\right).
\]
By Remark~\ref{rem:fluctuation},
\[
\frac1N\Delta_{g_N}f_N
=\frac1N\Phi''(m)\int|\nabla\phi|^2\dd\mu_{\boldsymbol{x}}^N.
\]
Hence nonlinear post-composition generally destroys empirical harmonicity. For instance, if there is a point $x$ with $\nabla\phi(x)\neq0$ and $\Phi''(\phi(x))\neq0$, then the $N=1$ pullback is not harmonic at $x$. This contrasts with the linear statistic in Proposition~\ref{prop:partial-converse}.
\end{example}

\section{Concluding remarks}

Theorem~\ref{thm:main} is a Liouville theorem for a finite-particle trace notion of harmonicity on $\Ptwo(M)$. Its mechanism is transparent:
\[
\Ptwo(M)=\overline{\bigcup_{N\geq1}\iota_N(M^N)}^{\,\Wtwo},
\]
and exact finite-dimensional Liouville rigidity on each particle space propagates to the Wasserstein space by continuity.

The discrete lift in Section~4 gives a precise second-order interpretation. For each $N$, the empirical Laplacian is the full Hessian trace on the finite-dimensional Hilbert space $H_N=L^2(m_0^N;\R^d)$. By contrast, tracing only along constant vector fields measures common translations and produces all-pair terms; it is a partial directional trace, not an infinite-dimensional Hilbert-space trace. These two operations should therefore be kept conceptually distinct.

Several questions remain open within this framework. One may study subharmonic functions, polynomial-growth solutions, $p$-harmonic analogues, or equations imposed only for selected particle numbers. It would also be useful to compare exact all-$N$ harmonicity with asymptotic conditions as $N\to\infty$ and with harmonicity associated with specific Dirichlet forms on $\Ptwo(M)$. Those problems require additional analytic input beyond the boundedness and density argument used here.

% \section*{Acknowledgment}
% Hongwei Yuan is supported by the National Natural Science Foundation of China, Key Program (Grant No.~12531010).

% \section*{Conflict of interest statement}
% The authors have no relevant financial or non-financial interests to disclose.

% \section*{Data availability statement}
% Data sharing is not applicable to this manuscript because no datasets were generated or analyzed. The work is entirely theoretical.


\begin{thebibliography}{99}

\bibitem[AGS08]{AGS2008}
L.~Ambrosio, N.~Gigli, and G.~Savar\'e,
\emph{Gradient Flows in Metric Spaces and in the Space of Probability Measures},
2nd ed., Lectures in Mathematics ETH Z\"urich, Birkh\"auser, Basel, 2008,
\doi{10.1007/978-3-7643-8722-8}.

\bibitem[BWYY26]{BWYY2026}
A.~Bensoussan, T.~K.~Wong, S.~C.~P.~Yam, and H.~Yuan,
\emph{A Theory of First Order Mean Field Type Control Problems and their Equations},
Journal of the European Mathematical Society (2026), published online first,
\doi{10.4171/JEMS/1781}.

\bibitem[CG19]{ChowGangbo2019}
Y.~T. Chow and W.~Gangbo,
\emph{A partial Laplacian as an infinitesimal generator on the Wasserstein space},
Journal of Differential Equations \textbf{267} (2019), no.~10, 6065--6117,
\doi{10.1016/j.jde.2019.06.012}.

\bibitem[CY75]{ChengYau1975}
S.~Y. Cheng and S.~T. Yau,
\emph{Differential equations on Riemannian manifolds and their geometric applications},
Communications on Pure and Applied Mathematics \textbf{28} (1975), no.~3, 333--354,
\doi{10.1002/cpa.3160280303}.

\bibitem[G12]{Gigli2012}
N.~Gigli,
\emph{Second order analysis on $(\mathcal P_2(M),W_2)$},
Memoirs of the American Mathematical Society \textbf{216} (2012), no.~1018, xii+154 pp.,
\doi{10.1090/S0065-9266-2011-00619-2}.

\bibitem[Gr99]{Grigoryan1999}
A.~Grigor'yan,
\emph{Analytic and geometric background of recurrence and non-explosion of the Brownian motion on Riemannian manifolds},
Bulletin of the American Mathematical Society \textbf{36} (1999), no.~2, 135--249,
\doi{10.1090/S0273-0979-99-00776-4}.

\bibitem[Lee18]{Lee2018}
J.~M. Lee,
\emph{Introduction to Riemannian Manifolds},
2nd ed., Graduate Texts in Mathematics, vol.~176, Springer, Cham, 2018,
\doi{10.1007/978-3-319-91755-9}.


\bibitem[Lo08]{Lott2008}
J.~Lott,
\emph{Some geometric calculations on Wasserstein space},
Communications in Mathematical Physics \textbf{277} (2008), no.~2, 423--437,
\doi{10.1007/s00220-007-0367-3}.

\bibitem[Ot01]{Otto2001}
F.~Otto,
\emph{The geometry of dissipative evolution equations: the porous medium equation},
Communications in Partial Differential Equations \textbf{26} (2001), no.~1--2, 101--174,
\doi{10.1081/PDE-100002243}.

\bibitem[St26]{Sturm2026}
K.-T. Sturm,
\emph{Wasserstein diffusion on multidimensional spaces},
The Annals of Probability \textbf{54} (2026), no.~2, 610--643,
\doi{10.1214/25-AOP1774}.

\bibitem[vRS09]{vonRenesseSturm2009}
M.-K. von Renesse and K.-T. Sturm,
\emph{Entropic measure and Wasserstein diffusion},
The Annals of Probability \textbf{37} (2009), no.~3, 1114--1191,
\doi{10.1214/08-AOP430}.

\bibitem[Vi03]{Villani2003}
C.~Villani,
\emph{Topics in Optimal Transportation},
Graduate Studies in Mathematics, vol.~58, American Mathematical Society, Providence, RI, 2003,
\doi{10.1090/gsm/058}.

\bibitem[Vi09]{Villani2009}
C.~Villani,
\emph{Optimal Transport: Old and New},
Grundlehren der mathematischen Wissenschaften, vol.~338, Springer, Berlin, 2009,
\doi{10.1007/978-3-540-71050-9}.

\bibitem[Yau75]{Yau1975}
S.-T. Yau,
\emph{Harmonic functions on complete Riemannian manifolds},
Communications on Pure and Applied Mathematics \textbf{28} (1975), no.~2, 201--228,
\doi{10.1002/cpa.3160280203}.

\end{thebibliography}
\end{document}